\newtheorem{theorem}{Theorem}[section]
\newtheorem{cor}[theorem]{Corollary}
\newtheorem{lem}[theorem]{Lemma}
\newtheorem{prop}[theorem]{Proposition}
\def \Zl {{\mathbb Z}}
\def \Nl {{\mathbb N}}
\def \Rl {{\mathbb R}}
\def \Ql {{\mathbb Q}}
\def \Cl {{\mathbb C}}
\def \vl {{\mathbf v}}
\title{ Pretty Good State Transfer on Circulant Graphs}
\author{ Hiranmoy Pal\\
Department of Mathematics\\
Indian Institute of Technology Guwahati\\Guwahati, India - 781039\\
Email: hiranmoy@iitg.ernet.in\\
\\
Bikash Bhattacharjya\\
Department of Mathematics\\
Indian Institute of Technology Guwahati\\Guwahati, India - 781039\\
Email: b.bikash@iitg.ernet.in
}
\begin{document}
\maketitle

\vspace{-0.3in}

\begin{center}{Abstract}\end{center}
Let $G$ be a graph with adjacency matrix $A$. The transition matrix of $G$ relative to $A$ is defined by $H(t):=\exp{\left(-itA\right)},\;t\in\Rl$. The graph $G$ is said to admit pretty good state transfer between a pair of vertices $u$ and $v$ if there exists a sequence of real numbers $\{t_k\}$ and a complex number $\gamma$ of unit modulus such that $\lim\limits_{k\rightarrow\infty} H(t_k) e_u=\gamma e_v.$ We find that pretty good state transfer occurs in a cycle on $n$ vertices if and only if $n$ is a power of two and it occurs between every pair of antipodal vertices. In addition, we look for pretty good state transfer in more general circulant graphs. We prove that union (edge disjoint) of an integral circulant graph with a cycle, each on $2^k$ $(k\geq 3)$ vertices, admits pretty good state transfer. The complement of such union also admits pretty good state transfer. This enables us to find some non-circulant graphs admitting pretty good state transfer. Among the complement of cycles we also find a class of graphs not exhibiting pretty good state transfer.\\
\noindent {\textbf{Keywords}: Circulant graph, Pretty good state transfer, Kronecker approximation theorem.} 

\section{Introduction}
Perfect state transfer (PST) has great significance due to its applications in quantum information processing and cryptography (see \cite{ben,cha,ek}). The phenomenon of PST in quantum communication networks was originally introduced by Bose in \cite{bose}. In mathematical terms, PST is defined as follows. The transition matrix of a graph $G$ with adjacency matrix $A$ is defined by
\[H(t):=\exp{\left(-itA\right)},\;t\in\Rl.\]
The transition matrix $H(t)$ is a unitary matrix and it is also a polynomial in $A$. The graph $G$ is said to exhibit PST from a vertex $u$ to another vertex $v$ if there exists a real number $\tau$ and a complex number $\gamma$ with $|\gamma|=1$ such that $H(\tau)e_u=\gamma e_v.$ In case $H(\tau)e_u=\gamma e_u$, we say that $G$ is periodic at the vertex $u$. Moreover, $G$ is said to be periodic if there is $\tau\in\Rl$ such that $H(\tau)$ is a scalar multiple of the identity matrix, in which case the graph is periodic at all vertices. Finding whether a given graph has PST is quite difficult especially when the graph is large. Remarkably, in \cite{cou1}, Coutinho \emph{et al.} showed that one can decide whether a graph admits PST in polynomial time with respect to the size of the graph on a classical computer.\par
In initial papers (see \cite{chr1,chr2}), we find that PST occurs on Cartesian powers of a path on two vertices and a path on three vertices. Further the results have been generalized to NEPS of the respective graphs (see\cite{ber,chi,pal}). In \cite{god1}, we find that a regular graph is periodic if and only if its eigenvalues are integers. As a result, if PST occurs on a Cayley graph then it must be integral. A characterization of PST in integral circulent graphs appears in \cite{mil4}. Some results on PST in gcd-graph, which is a special class of integral Cayley graph, can be found in \cite{pal1,pal2}. In \cite{god2}, we see that there are only finitely many connected graphs with maximum valency at most $k$ where PST occurs. Since there are less number of graphs having PST, we consider a relaxation to PST called pretty good state transfer (PGST).\par
The notion of PGST was first introduced by Godsil in \cite{god1}. A graph $G$ with transition matrix $H(t)$ has PGST between a pair of vertices $u$ and $v$ if there is a sequence of real numbers $\{t_k\}$ and a complex number of unit modulus $\gamma$ such that \[\lim_{k\rightarrow\infty} H(t_k) e_u=\gamma e_v.\]
In such a case, we also say that $G$ exhibits PGST from $u$ to $v$ with respect to the sequence $\{t_k\}$. This is equivalent to say that for $\epsilon>0$, there exists $t\in \Rl$ and $\gamma\in\Cl$ with $|\gamma|=1$ such that
\[\left| e_u^T H(t) e_v -\gamma\right|<\epsilon.\]
There are a few published papers which discuss PGST. Godsil \emph{et al.} \cite{god4} showed that the path $P_n$ exhibits PGST if and only if $n+1$ equals to either $2^m$ or $p$ or $2p$, where $p$ is an odd prime. We also see in \cite{fan} that a double star $S_{k,k}$ admits PGST if and only if $4k+1$ is not a perfect square. The double star can be realized as a corona product of the complete graph $K_2$ and an empty graph. PGST in more general corona products has been studied in \cite{ack,ack1}. Moreover, in \cite{pal3}, we find some NEPS of the path on three vertices having PGST. Some other relevant results regarding PST and PGST can be found in \cite{cou,cou2,god3,kirk}.\par
Circulant graphs arises frequently in communication networks. Among the circulant graphs only integral circulant graphs are periodic (see \cite{god1}). It can be proved that if a graph is periodic then the graph has PGST if and only if it has PST. Since a complete characterization of PST in integral circulant graph is known, we consider PGST in circulant graphs which are not integral. In the present article, we completely classify which cycles exhibit PGST. Beside cycles, we also find two classes of non-integral circulant graphs one of which exhibits PGST and the other does not. Apart from circulant graphs, we use Cartesian product to find some non-circulant graphs having PGST.\par
Let $\left(\Gamma,+\right)$ be a finite abelian group and consider $S\setminus\{0\}\subseteq\Gamma$ with $\left\lbrace -s:s\in S\right\rbrace=S$. Such a set $S$ is called a symmetric subset of $\Gamma$. A Cayley graph over $\Gamma$ with a symmetric set $S$ is denoted by $Cay\left(\Gamma,S\right)$. The graph has the vertex set $\Gamma$ where two vertices $a,b\in\Gamma$ are adjacent if and only if $a-b\in S$. The set $S$ is called the connection set of $Cay\left(\Gamma,S\right)$. Let $\Zl_n$ be the cyclic group of order $n$. A circulant graph is a Cayley graph over $\Zl_n$. A cycle $C_n$, in particular, is a circulant graph over $\Zl_n$ with the connection set $\{1,n-1\}$. Eigenvalues and eigenvectors of a cycle are very well known. Suppose $\omega_n=\exp{\left(\frac{2\pi i}{n}\right)}$ is the primitive $n$-th root of unity. Then the eigenvalues of $C_n$ are
\begin{eqnarray}\label{e}
\lambda_l=2\cos{\left(\frac{2l\pi}{n}\right)},\;0\leq l\leq n-1,
\end{eqnarray}
and the corresponding eigenvectors are $\vl_l=\left[1,\omega_n^l,\ldots,\omega_n^{l(n-1)}\right]^T$.\par
The eigenvalues and eigenvectors of a Cayley graph over an abelian group are also known in terms of characters of the abelian group. In \cite{wal1}, it appears that the eigenvectors of a Cayley graph over an abelian group are independent of the connection set. Consider two symmetric subsets $S_1,S_2$ of $\Gamma$. So the set of eigenvectors of both $Cay(\Gamma, S_1 )$ and $Cay(\Gamma, S_2 )$ can be chosen to be equal. Hence we have the following result.
\begin{prop}\label{3ab}
If $S_1$ and $S_2$ are symmetric subsets of an abelian group $\Gamma$ then adjacency matrices of the Cayley garphs $Cay(\Gamma, S_1 )$ and $Cay(\Gamma, S_2 )$ commute.
\end{prop}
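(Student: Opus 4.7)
The plan is to exploit the fact, quoted from \cite{wal1} in the paragraph preceding the statement, that the eigenvectors of the adjacency matrix of any Cayley graph over an abelian group $\Gamma$ can be chosen independently of the connection set. Concretely, the characters of $\Gamma$ provide a common orthonormal basis of $\Cl^{|\Gamma|}$ that simultaneously diagonalizes every Cayley adjacency matrix over $\Gamma$. So my goal is to assemble this into a simultaneous-diagonalization argument and conclude commutativity.

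First I would fix an enumeration of $\Gamma$ and let $P$ be the matrix whose columns are the character vectors of $\Gamma$. Invoking the cited result, I would write the adjacency matrices of $Cay(\Gamma,S_1)$ and $Cay(\Gamma,S_2)$ as
\[
A_1 = P D_1 P^{-1}, \qquad A_2 = P D_2 P^{-1},
\]
where $D_1$ and $D_2$ are diagonal matrices whose entries are the eigenvalues $\sum_{s\in S_j}\chi(s)$ over the characters $\chi$ of $\Gamma$. Since diagonal matrices always commute, I would then compute
\[
A_1 A_2 = P D_1 D_2 P^{-1} = P D_2 D_1 P^{-1} = A_2 A_1,
\]
which is the desired conclusion.

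There is really no hard step here; the content of the proposition is essentially a corollary of the connection-set-independence of the character basis. The only point where I would need to be slightly careful is making sure the argument cited from \cite{wal1} is stated for the full adjacency matrix (not merely that the characters are eigenvectors of individual group elements acting by translation), so that the diagonalization by $P$ applies to $A_1$ and $A_2$ simultaneously. Given that the proposition is stated precisely in this form in the paper, I would simply reference the preceding discussion and present the three-line computation above.
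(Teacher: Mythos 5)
Your proposal is correct and is essentially the paper's own argument: the paper derives the proposition from the fact (cited from \cite{wal1}) that the character vectors form a common eigenbasis for all Cayley adjacency matrices over $\Gamma$, which is exactly your simultaneous-diagonalization computation made explicit.
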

If $d$ is a proper divisor of $n$ then we define
\[S_n(d)=\{x\in\Zl_n: gcd(x,n)=d\},\]
and for any set $D$ containing proper divisors of $n$, we define
\[S_n(D)=\bigcup\limits_{d\in D} S_n(d).\]
The set $S_n(D)$ is called a gcd-set of $\Zl_n$. A gcd graph over $\Zl_n$ is a circulant graph whose connection set is a gcd set. We denote a gcd graph with the connection set $S_n(D)$ by $G(n,D)$.\par
A graph is called integral if all its eigenvalues are integers. The following theorem determines circulant graphs which are integral.
\begin{theorem}\cite{so}
A circulant graph is integral if and only if the connection set is a gcd-set.
\end{theorem}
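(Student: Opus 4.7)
The plan is to work with the standard formula for eigenvalues of a Cayley graph over $\Zl_n$: for a symmetric set $S\subseteq\Zl_n$, the eigenvalues of $Cay(\Zl_n,S)$ are $\lambda_l=\sum_{s\in S}\omega_n^{ls}$ for $0\le l\le n-1$. Both directions will be consequences of Galois-theoretic and Fourier-analytic properties of these character sums.

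For the ``if'' direction, suppose $S=S_n(D)=\bigcup_{d\in D}S_n(d)$ is a gcd-set. It suffices to show each inner sum $\sum_{x\in S_n(d)}\omega_n^{lx}$ is a rational integer. Writing $x=dy$ with $\gcd(y,n/d)=1$ gives
\[
\sum_{x\in S_n(d)}\omega_n^{lx}=\sum_{\substack{1\le y\le n/d\\ \gcd(y,n/d)=1}}\omega_{n/d}^{ly},
\]
which is the Ramanujan sum $c_{n/d}(l)$. Ramanujan sums are classically known to equal $\mu(m/\gcd(m,l))\varphi(m)/\varphi(m/\gcd(m,l))$ with $m=n/d$, hence integers. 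Summing over $d\in D$ shows $\lambda_l\in\Zl$ for every $l$.

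For the ``only if'' direction, the key observation is that each $\lambda_l$ lies in the cyclotomic field $\Ql(\omega_n)$ and is an algebraic integer; so $\lambda_l\in\Zl$ iff it is fixed by the Galois group $\mathrm{Gal}(\Ql(\omega_n)/\Ql)\cong(\Zl/n\Zl)^{\times}$. The element corresponding to $k\in(\Zl/n\Zl)^{\times}$ sends $\omega_n\mapsto\omega_n^k$, hence sends $\lambda_l$ to $\lambda_{kl}$. Therefore integrality of all $\lambda_l$ is equivalent to $\lambda_l=\lambda_{kl}$ for every $l$ and every $k$ coprime to $n$.

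I now translate this symmetry back to $S$ by Fourier inversion on $\Zl_n$. Letting $\chi_S$ be the indicator of $S$, the substitution $s\mapsto k^{-1}s$ shows that the Fourier transform of the shifted indicator $s\mapsto\chi_S(k^{-1}s)$ at $l$ equals $\lambda_{kl}$. Thus $\lambda_l=\lambda_{kl}$ for all $l$ forces $\chi_S(k^{-1}s)=\chi_S(s)$, i.e.\ $S$ is stable under multiplication by every unit of $\Zl_n$. The orbits of this multiplicative action on $\Zl_n\setminus\{0\}$ are precisely the sets $S_n(d)$ with $d$ a proper divisor of $n$, so $S$ is a union of such orbits and hence a gcd-set. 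The main obstacle is the reverse implication; the delicate part is ensuring that Galois-invariance of every $\lambda_l$ translates cleanly into the combinatorial statement that $S$ is closed under multiplication by units, which is precisely what the Fourier inversion step delivers.
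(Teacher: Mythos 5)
Your proof is correct. Note, however, that the paper offers no proof of this statement at all: it is quoted verbatim from So's article on integral circulant graphs, so there is no internal argument to compare yours against. On its own merits, your argument is the standard (and complete) one. The ``if'' direction correctly reduces each block $\sum_{x\in S_n(d)}\omega_n^{lx}$ to the Ramanujan sum $c_{n/d}(l)$, whose integrality is classical (H\"older's formula). The ``only if'' direction correctly observes that each $\lambda_l$ is an algebraic integer in $\Ql(\omega_n)$, so integrality of all eigenvalues is equivalent to invariance under the full Galois group, i.e.\ $\lambda_{kl}=\lambda_l$ for all units $k$; injectivity of the Fourier transform on $\Zl_n$ then converts this into $kS=S$ for all $k\in(\Zl/n\Zl)^{\times}$, and the orbits of the unit group acting multiplicatively on $\Zl_n\setminus\{0\}$ are exactly the sets $S_n(d)$ (two nonzero residues lie in the same orbit iff they have the same gcd with $n$, using the standard lifting of a unit mod $n/d$ to a unit mod $n$). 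The only step you state without justification is this orbit description; it is a routine CRT argument, but if you were writing this out in full you should include the lifting lemma, since it is precisely where ``Galois-invariant'' becomes ``union of gcd-classes.''
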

The Cartesian product of two graphs $G_{1}$ and $G_{2}$ with vertex sets $V_{1}$ and $V_{2}$ is the graph $G_{1}\square G_{2}$, with vertex set $V_{1}\times V_{2}$. Two vertices $(u_{1},u_{2})$ and $(v_{1},v_{2})$ are adjacent in $G_{1}\square G_{2}$ if and only if either $u_{1}$ is adjacent to $v_{1}$ in $G_{1}$ and $u_{2}=v_{2}$, or $u_{1}=v_{1}$ and $u_{2}$ is adjacent to $v_{2}$ in $G_{2}$. The transition matrix of a Cartesian product of two graphs is given by the following result.
\begin{lem}\cite{chr2}
Let $G_1$ and $G_2$ be two graphs having transition matrices $H_{G_1}(t)$ and $H_{G_2}(t)$, respectively. Then the transition matrix of $G_1\square G_2$ is $H_{G_1\square G_2}(t)=H_{G_1}(t)\otimes H_{G_2}(t)$.
\end{lem}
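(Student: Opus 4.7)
The plan is to derive the formula directly from the structure of the adjacency matrix of the Cartesian product and then exponentiate. First I would recall the standard fact that if $A_1$ and $A_2$ are the adjacency matrices of $G_1$ and $G_2$, then, with respect to the natural ordering of $V_1\times V_2$, the adjacency matrix of $G_1\square G_2$ is
\[
A_{G_1\square G_2}=A_1\otimes I_{n_2}+I_{n_1}\otimes A_2,
\]
where $n_i=|V_i|$. This is immediate from the adjacency rule defining $\square$: a vertex $(u_1,u_2)$ is adjacent to $(v_1,v_2)$ exactly in the two cases that correspond to the two Kronecker summands.

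Next I would observe that the two summands commute. Indeed, by the mixed-product property of Kronecker products,
\[
(A_1\otimes I_{n_2})(I_{n_1}\otimes A_2)=A_1\otimes A_2=(I_{n_1}\otimes A_2)(A_1\otimes I_{n_2}).
\]
Since commuting matrices $X,Y$ satisfy $\exp(X+Y)=\exp(X)\exp(Y)$, I would then write
\[
H_{G_1\square G_2}(t)=\exp\!\bigl(-it(A_1\otimes I_{n_2})\bigr)\cdot\exp\!\bigl(-it(I_{n_1}\otimes A_2)\bigr).
\]

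Finally I would simplify each factor. Using the power series definition of the exponential together with $(M\otimes I)^k=M^k\otimes I$ and $(I\otimes M)^k=I\otimes M^k$, one gets $\exp(-it(A_1\otimes I_{n_2}))=H_{G_1}(t)\otimes I_{n_2}$ and $\exp(-it(I_{n_1}\otimes A_2))=I_{n_1}\otimes H_{G_2}(t)$. Multiplying and applying the mixed-product property once more yields
\[
H_{G_1\square G_2}(t)=\bigl(H_{G_1}(t)\otimes I_{n_2}\bigr)\bigl(I_{n_1}\otimes H_{G_2}(t)\bigr)=H_{G_1}(t)\otimes H_{G_2}(t),
\]
as required. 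There is no real obstacle here; the only place where one must be careful is the commutativity check, since the Baker--Campbell--Hausdorff collapse $e^{X+Y}=e^Xe^Y$ fails without it, but the mixed-product identity makes this check automatic.
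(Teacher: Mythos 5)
Your proof is correct and is the standard argument: the paper itself cites this lemma from the literature without proof, and the derivation you give (writing $A_{G_1\square G_2}=A_1\otimes I+I\otimes A_2$, checking the summands commute via the mixed-product property, splitting the exponential, and simplifying each factor) is exactly the usual one. No gaps.
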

Now we introduce Kronecker approximation theorem on simultaneous approximation of numbers. This will be used later to find graphs allowing PGST. 
\begin{theorem}[Kronecker approximation theorem]\cite{apo}
If $\alpha_1,\ldots,\alpha_l$ are arbitrary real numbers and if $1,\theta_1,\ldots, \theta_l$ are real, algebraic numbers linearly independent over $\Ql$ then for $\epsilon>0$ there exist $q\in\Zl$ and $p_1,\ldots,p_l\in\Zl$ such that
\[\left|q\theta_j-p_j-\alpha_j\right|<\epsilon.\]
\end{theorem}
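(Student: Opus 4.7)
The plan is to reformulate the conclusion as a density statement on the torus $\mathbb{T}^l = \Rl^l/\Zl^l$ and deduce it from Weyl's equidistribution criterion. Write $\theta=(\theta_1,\ldots,\theta_l)$ and $\alpha=(\alpha_1,\ldots,\alpha_l)$. Finding integers $q,p_1,\ldots,p_l$ with $|q\theta_j-p_j-\alpha_j|<\epsilon$ for each $j$ is equivalent to finding an integer $q$ whose orbit point $q\theta \bmod \Zl^l$ lies within sup-distance $\epsilon$ of $\alpha \bmod \Zl^l$. So the theorem reduces to showing that the orbit $\{q\theta \bmod \Zl^l : q\in\Zl\}$ is dense in $\mathbb{T}^l$.

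First I would invoke Weyl's criterion: the sequence $(q\theta \bmod \Zl^l)_{q\geq 1}$ is equidistributed in $\mathbb{T}^l$ if and only if for every nonzero lattice vector $k=(k_1,\ldots,k_l)\in \Zl^l$ one has
\[
\lim_{N\to\infty} \frac{1}{N}\sum_{q=1}^{N} e^{2\pi i q \langle k,\theta\rangle}=0,
\]
where $\langle k,\theta\rangle = k_1\theta_1+\cdots+k_l\theta_l$. The hypothesis that $1,\theta_1,\ldots,\theta_l$ are linearly independent over $\Ql$ ensures $\langle k,\theta\rangle\notin \Zl$ for every $k\neq 0$, so $e^{2\pi i \langle k,\theta\rangle}\neq 1$ and the geometric sum has modulus bounded by $2/|1-e^{2\pi i\langle k,\theta\rangle}|$ independently of $N$. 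Dividing by $N$ and letting $N\to\infty$ gives the required limit, so the orbit is equidistributed and in particular dense, which yields the desired $q$ and $p_j$.

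The main obstacle is the proof of Weyl's criterion itself: one must approximate the characteristic function of a small sup-ball around $\alpha$ by trigonometric polynomials, using Stone--Weierstrass on $\mathbb{T}^l$, in order to pass from exponential-sum estimates to an equidistribution (hence density) statement. This analytic step is entirely standard; all the arithmetic content is concentrated in the rational-independence hypothesis, which is used exactly to guarantee that the nonzero Fourier modes do not resonate with the rotation. Observe that algebraicity of the $\theta_j$ plays no role in the argument, so the algebraic assumption in the statement can in fact be dropped; this recovers the classical Kronecker--Weyl theorem in its usual form.
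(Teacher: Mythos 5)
This statement is quoted from Apostol and the paper supplies no proof of it, so there is no in-paper argument to compare against; I can only assess your sketch on its own terms, and it is a correct outline of the standard proof of the Kronecker--Weyl theorem. Your reduction to density of the orbit $\{q\theta \bmod \Zl^l\}$ in the torus is right (density in the quotient sup-metric is exactly the simultaneous approximation statement), and the key arithmetic step is handled correctly: a relation $\langle k,\theta\rangle\in\Zl$ with $k\neq 0$ would be a nontrivial integer dependence among $1,\theta_1,\ldots,\theta_l$, so the hypothesis forces $e^{2\pi i\langle k,\theta\rangle}\neq 1$, and the geometric-series bound $2/\left|1-e^{2\pi i\langle k,\theta\rangle}\right|$ then kills the Weyl sums after division by $N$. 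The only substantive debt is Weyl's criterion itself (the Fej\'er/Stone--Weierstrass approximation of indicators by trigonometric polynomials), which you flag explicitly; that is acceptable for a proof at this level of citation. Two small remarks: you are also right that algebraicity of the $\theta_j$ is irrelevant to the qualitative statement --- it matters only for the effective bounds on $q$ that the paper mentions via the Malajovich reference, and in the paper's application the $\theta_j$ are the eigenvalues $2\cos(2\pi l/n)$, which are algebraic anyway. Note too that Weyl equidistribution is strictly stronger than what is needed; a purely pigeonhole induction on $l$ would give density without any Fourier analysis, but your route is the cleanest standard one.
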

A bound on $q$ is given in \cite{mala} relative to the precision $\epsilon$ and some other given constraints.

\section{Pretty Good State Transfer on Circulant Graphs}

We begin with the discussion that the odd cycles never exhibit PGST. Moreover, if an even cycle admits PGST then it must occur only between the antipodal vertices. Suppose $G$ is a graph with adjacency matrix $A$. If $P$ is the matrix of an automorphism of $G$ then $P$ must commute with $A$ and consequently $P$ commutes with the transition matrix $H(t)$. Suppose $G$ allows PGST between two vertices $u$ and $v$. Then there exists a sequence of real numbers $\{t_k\}$ and a complex number $\gamma$ of unit modulus such that
\[\lim_{k\rightarrow\infty} H(t_k) e_u=\gamma e_v.\]
Further this implies that
\[\lim_{k\rightarrow\infty} H(t_k) \left(Pe_u\right)=\gamma \left(Pe_v\right).\]
Since the sequence $\{H(t_k)e_u\}$ cannot have two different limits, we conclude that if $P$ fixes $e_u$ then $P$ must fix $e_v$ as well. As a consequence, we have the following result.
\begin{lem}\label{l0}
If pretty good state transfer occurs in a cycle $C_{n}$ or in the complement of $C_{n}$ then $n$ is even and it occurs only between the pair of vertices $u$ and $u+\frac{n}{2}$, where $u,u+\frac{n}{2}\in\Zl_n$.
\end{lem}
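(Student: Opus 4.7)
The plan is to specialize the automorphism-invariance argument sketched by the authors in the paragraph preceding the lemma to the natural reflection of $C_n$ about a vertex. Since a graph and its complement share the same automorphism group, a single argument will handle both $C_n$ and $\bar{C}_n$.

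For any vertex $u \in \Zl_n$, I introduce $\rho_u : \Zl_n \to \Zl_n$, $x \mapsto 2u - x \pmod{n}$. This map is a graph automorphism of $C_n$, since it preserves the cyclic adjacency relation $\{i,\, i\pm 1\}$. Let $P$ denote the associated permutation matrix; then $P$ commutes with the adjacency matrices of both $C_n$ and $\bar{C}_n$, hence with the respective transition matrix $H(t)$. By construction, $\rho_u(u) = u$, so $P e_u = e_u$. The vertices fixed by $\rho_u$ are exactly the solutions of $2(x - u) \equiv 0 \pmod{n}$; when $n$ is odd this forces $x = u$, while when $n$ is even there are precisely two fixed vertices, namely $u$ and $u + \tfrac{n}{2}$.

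Now suppose PGST occurs from $u$ to a distinct vertex $v$, so that $H(t_k) e_u \to \gamma e_v$ for some sequence $\{t_k\}$ and a unit-modulus $\gamma$. Applying $P$ and using $P H(t_k) = H(t_k) P$, I obtain $H(t_k) e_u \to \gamma\, P e_v$; uniqueness of limits then forces $P e_v = e_v$, i.e., $v$ is fixed by $\rho_u$. Combined with the description of the fixed set above, this yields the conclusion: $n$ must be even and $v = u + \tfrac{n}{2}$. I do not anticipate any substantive obstacle, since this is essentially a direct instantiation of the argument already made by the authors; the only additional content is singling out the reflection $\rho_u$ and the elementary arithmetic in $\Zl_n$ identifying its fixed points.
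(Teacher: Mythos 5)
Your proposal is correct and is essentially the paper's own argument: the authors prove the general statement that any automorphism fixing $e_u$ must also fix $e_v$ in the paragraph preceding the lemma, and the lemma is obtained exactly by instantiating this with the reflection $x\mapsto 2u-x$ of $\Zl_n$, whose fixed-point set is $\{u\}$ for odd $n$ and $\{u,\,u+\tfrac{n}{2}\}$ for even $n$. You have merely written out explicitly the step the paper leaves implicit, and all details (the reflection being an automorphism of both $C_n$ and its complement, commutation with $H(t)$, and uniqueness of limits) check out.
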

From now onwards we only consider the even cycles. Notice that it is enough to find PGST in $C_{n}$ between the pair of vertices $0$ and $\frac{n}{2}$. We thus calculate the $(0,\frac{n}{2})$-th entry of the transition matrix of $C_n$. If $\sum\limits_{l=0}^{n-1}\lambda_lE_l$ is the spectral decomposition of the adjacency matrix $A$ of $C_n$ then the transition matrix can be calculated as
\[H(t)=\exp{\left(-itA\right)}=\sum\limits_{l=0}^{n-1}\exp{\left(-i\lambda_l t\right)E_l}.\]
Using $\left(\ref{e}\right)$, we evaluate the $(0,\frac{n}{2})$-th entry of $H(t)$ as 
\begin{eqnarray}\label{e0}
H(t)_{0,\frac{n}{2}}=\frac{1}{n}\sum\limits_{l=0}^{n-1}\exp{\left(-i\lambda_l t\right)}\cdot\omega_{n}^{\frac{nl}{2}}=\frac{1}{n}\sum\limits_{l=0}^{n-1}\exp{\left[-i\left(\lambda_l t+l\pi\right)\right]}.
\end{eqnarray}
In the following result, we distinguish a class of cycles admitting PGST. Later, we will show that these are the only cycles with PGST.
\begin{lem}\label{l1}
A cycle $C_{n}$ exhibits pretty good state transfer if $n=2^k,\;k\geq 2$, with respect to a sequence in $2\pi\Zl.$ 
\end{lem}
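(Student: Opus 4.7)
My plan is to evaluate the formula (\ref{e0}) at times of the form $t=2\pi q$, $q\in\Zl$, collapse the resulting exponential sum using two nested layers of symmetry among the eigenvalues of $C_n$, and then drive the surviving cosine sum toward its maximum by means of Kronecker's approximation theorem.

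I would first use $\lambda_{n-l}=\lambda_l$ together with the fact that $n$ is even to pair the $l$-th and $(n-l)$-th summands in (\ref{e0}). For $n=2^k$ with $k\geq 3$ both $(-1)^{n/2}$ and $(-1)^{n/4}$ equal $1$, and substituting $t=2\pi q$ makes the fixed-point contributions at $l=0$ and $l=n/2$ each equal to $1$. Next I would apply the second symmetry $\lambda_{n/2-l}=-\lambda_l$ (immediate from the cosine addition formula, since $n$ is even) to pair the indices $l$ and $n/2-l$ inside the remaining sum. The isolated middle index $l=n/4$ contributes exactly $+1$ (because $\lambda_{n/4}=0$ and $(-1)^{n/4}=1$), while each genuine pair contributes $2(-1)^l\cos(2\pi\lambda_l q)$. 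Assembling the pieces yields the clean identity
\[
H(2\pi q)_{0,n/2}=\frac{4}{n}\left[1+\sum_{l=1}^{n/4-1}(-1)^l\cos(2\pi\lambda_l q)\right].
\]

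I would then reduce pretty good state transfer to a Diophantine approximation problem. Since every cosine lies in $[-1,1]$, the modulus of the displayed expression is at most $1$, and it approaches $1$ precisely when each cosine approaches $(-1)^l$. Hence it suffices to find integers $q$ satisfying
\[
\lambda_l q\equiv\frac{l}{2}\pmod 1,\qquad l=1,\ldots,\frac{n}{4}-1,
\]
to arbitrary precision. To invoke Kronecker's theorem with $\theta_l=\lambda_l$ and $\alpha_l=l/2$, I must verify that $1,\lambda_1,\ldots,\lambda_{n/4-1}$ are algebraic and $\Ql$-linearly independent. Algebraicity is immediate. For linear independence I would use the Chebyshev identity $\lambda_l=2T_l(\lambda_1/2)$, in which $T_l$ has degree $l$; this exhibits $\{1,\lambda_1,\ldots,\lambda_{n/4-1}\}$ as the image of the power basis $\{1,\lambda_1,\lambda_1^2,\ldots,\lambda_1^{n/4-1}\}$ under an invertible triangular $\Ql$-linear change. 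The latter is a $\Ql$-basis of $\Ql(\lambda_1)=\Ql(2\cos(2\pi/2^k))$, the maximal totally real subfield of $\Ql(e^{2\pi i/2^k})$, whose degree over $\Ql$ equals $\phi(2^k)/2=2^{k-2}=n/4$.

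With linear independence in hand, Kronecker's theorem produces, for each $\epsilon>0$, an integer $q$ (and auxiliary integers $p_l$) with $|q\lambda_l-p_l-l/2|<\epsilon$ for all relevant $l$. Choosing a sequence $\{q_k\}$ corresponding to $\epsilon_k\to 0$ makes $\cos(2\pi\lambda_l q_k)\to(-1)^l$ for every $l$, so $H(2\pi q_k)_{0,n/2}\to 1$. Since $H(2\pi q_k)e_0$ is a unit vector whose $(n/2)$-th entry tends to $1$ in modulus, its remaining entries must vanish in the limit, giving pretty good state transfer from vertex $0$ to vertex $n/2$ with $\gamma=1$ along the sequence $t_k=2\pi q_k\in 2\pi\Zl$. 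The most delicate step is the $\Ql$-linear independence, which rests on the Chebyshev representation and the degree formula for the real cyclotomic field; the small case $k=2$ (namely $C_4$) is integral, hence periodic, and admits PST — and consequently PGST — by direct inspection.
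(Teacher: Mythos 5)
Your argument is correct and follows essentially the same route as the paper: evaluate $H$ at $t=2\pi q$, use the four-fold symmetry $\lambda_l=\lambda_{n-l}=-\lambda_{\frac{n}{2}\pm l}$ to reduce everything to the indices $1\le l\le \frac{n}{4}-1$, and apply Kronecker's theorem with targets $\alpha_l\equiv l/2\pmod 1$ so that each $\cos(2\pi\lambda_l q)\rightarrow(-1)^l$. The only real variation is in proving the $\Ql$-linear independence of $1,\lambda_1,\ldots,\lambda_{\frac{n}{4}-1}$, where you pass through Chebyshev polynomials and the degree $\phi(2^k)/2$ of the maximal real subfield of $\Ql(\omega_{2^k})$, while the paper bounds the degree of a putative dependence relation, rewritten as a polynomial in $\omega_n$, against $\phi(2^k)$; both are valid.
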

\begin{proof}
It is well known that a cycle on four vertices has PST between the antipodal vertices. Since PGST is a generalization of PST, it is enough to consider the case $n=2^k$ where $k\geq 3.$ First we show that the distinct positive eigenvalues of $C_n$ are linearly independent over $\Ql$. The eigenvalues of $C_n$ can be realized as
\[\lambda_l=2\cos{\left(\frac{2l\pi}{n}\right)}=\omega_n^l+\omega_n^{-l}.\]
It is well known that the minimal polynomial of $\omega_n$ over $\Ql$ has degree $\phi(n)$, where $\phi$ is Euler's phi-function (see \cite{esc}). It is evident that $\lambda_l$ is positive only when $-\frac{\pi}{2}<\frac{2l\pi}{n}<\frac{\pi}{2}$. Consequently, the distinct positive eigenvalues are $\lambda_l$ where $0\leq l<\frac{n}{4}.$ If the distinct positive eigenvalues are dependent over $\Ql$ then $\omega_n$ will be a root of a polynomial of degree at most $m=2^{k-1}-1$ as $\omega_n^{-1}=-\omega_n^{m}$. But since $2^{k-1}-1<2^{k-1}= \phi(2^k),$ we conclude that the distinct positive eigenvalues are linearly independent over $\Ql$. Thus the distinct positive eigenvalues are
\[\lambda_0,\lambda_1,\ldots,\lambda_{2^{k-2}-1}.\]
For $ 1\leq l\leq 2^{k-2}-1$, consider the following real numbers
\begin{eqnarray*}
\alpha_l=\begin{cases} 0, & \text{ if $l$ is even}\\
\frac{1}{2}, & \text{ if $l$ is odd.}
\end{cases}
\end{eqnarray*}
By Kronecker approximation theorem we find that for $\delta>0$ there exist $q,m_1,\ldots,m_{2^{k-2}-1}\in\Zl$ such that for $l=1,\ldots, 2^{k-2}-1$
\begin{eqnarray}\label{e1}
\left|q\lambda_l-m_l-\alpha_l\right|<\frac{\delta}{2\pi},\; \emph{i.e,}\;\left|2q\pi\lambda_l-2m_l\pi-2\alpha_l\pi\right|<\delta.
\end{eqnarray} 
The graph $C_{2^k}$ is bipartite and each of its eigenvalue is repeated twice except $2$ and $-2$. For $ 1\leq l\leq 2^{k-2}-1,$ we see that
\[\lambda_l=2\cos{\left(\frac{2l\pi}{n}\right)}=2\cos{\left(2\pi-\frac{2l\pi}{n}\right)}=\lambda_{n-l}.\] Similarly, for $ 1\leq l\leq 2^{k-2}-1$ we have
\[\lambda_l=-\lambda_{\frac{n}{2}-l}=-\lambda_{\frac{n}{2}+l}.\]
Since $\lambda_0,\lambda_{2^{k-2}},\lambda_{2^{k-1}}$ and $\lambda_{3\cdot2^{k-2}}$ are integers, considering $t=2q\pi$, we observe that for each $l=0,\;2^{k-2},\;2^{k-1},\;3\cdot2^{k-2}$ there exists an integer $l'$ such that
\[\left|\left(\lambda_l t+l\pi\right)- 2l'\pi\right|<\delta.\]
Also for each $l=1,\ldots, 2^{k-2}-1$, considering $t=2q\pi$ and using $\left(\ref{e1}\right)$, we find
\begin{eqnarray*}
l'=\begin{cases} \frac{2m_l+l+1}{2}, & \text{ if $l$ is odd}\\
\frac{2m_l+l}{2}, & \text{ if $l$ is even.}
\end{cases}
\end{eqnarray*}
such that
\[\left|\left(\lambda_l t+l\pi\right)- 2l'\pi\right|<\delta.\]
Since the relation $\lambda_l=-\lambda_{\frac{n}{2}-l}=-\lambda_{\frac{n}{2}+l}=\lambda_{n-l}$ on eigenvalues of $C_{2^k}$ holds for $ 1\leq l\leq 2^{k-2}-1$, considering $t=2q\pi$, we conclude that for each $l=0,\ldots, n-1,$ there is an integer $l'$ such that
\[\left|\left(\lambda_l t+l\pi\right)- 2l'\pi\right|<\delta.\]
Therefore by uniform continuity of the exponential function $\exp{\left(-ix\right)}$, it follows that for $\epsilon>0$ there exists $q\in\Zl$ so that $t=2q\pi$ and $\left|\exp{\left[-i\left(\lambda_l t+l\pi\right)\right]}- 1\right|<\epsilon.$ Finally, from equation (\ref{e0}), we observe that
\[\left|H(t)_{0,\frac{n}{2}}-1\right|=\frac{1}{n}\left|\sum\limits_{l=0}^{n-1}\left(\exp{\left[-i\left(\lambda_l t+l\pi\right)\right]}-1\right)\right|<\epsilon.\]
This leads to the conclusion that $C_n$ admits PGST whenever $n$ is a power of two, with respect to a sequence in $2\pi\Zl$.
\end{proof}
Now using Lemma \ref{l1} we find more general circulant graphs allowing PGST. We present this as a theorem. 
\begin{theorem}\label{t1}
Let $n=2^k$ with $k\geq 3$. If $D$ is a set of proper divisors of $n$ not containing $1$ then the circulant graph $C_n\cup G(n,D)$ as well as its complement admit pretty good state transfer with respect to the same sequence in $2\pi\Zl$. 
\end{theorem}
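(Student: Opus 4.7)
My plan for Theorem \ref{t1} is to push the PGST of $C_n$ through the factorization of the transition matrix, using that the gcd-graph part is trivial on $2\pi\Zl$ because it is integral. Two preliminary observations: first, since $n=2^k$ and $D$ excludes the divisor $1$, every element of $S_n(D)$ is even and hence distinct from $1$ and $n-1$, so $C_n$ and $G(n,D)$ are edge-disjoint and $A_{C_n\cup G(n,D)}=A_{C_n}+A_{G(n,D)}$; second, $G(n,D)$ is an integral Cayley graph on $\Zl_n$, so by Proposition \ref{3ab} its adjacency matrix commutes with that of $C_n$, and all its eigenvalues are integers.

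From commutativity,
\[
H_{C_n\cup G(n,D)}(t)=H_{C_n}(t)\,H_{G(n,D)}(t),
\]
and integrality forces $H_{G(n,D)}(t)=I$ whenever $t\in 2\pi\Zl$ (each factor $e^{-i\mu t}$ equals $1$). Applying Lemma \ref{l1}, pick $\{t_k\}\subset 2\pi\Zl$ and a phase $\gamma$ of unit modulus with $H_{C_n}(t_k)e_0\to\gamma e_{n/2}$. The factorization then yields $H_{C_n\cup G(n,D)}(t_k)e_0\to\gamma e_{n/2}$, giving PGST between $0$ and $n/2$.

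For the complement I would start from $A_{\overline{C_n\cup G(n,D)}}=J-I-A_{C_n}-A_{G(n,D)}$, where $J$ is the all-ones matrix. Since $J$ is circulant, all four summands pairwise commute, so
\[
H_{\overline{C_n\cup G(n,D)}}(t)=\exp(-itJ)\cdot e^{it}\cdot H_{C_n}(-t)\cdot H_{G(n,D)}(-t).
\]
Using $J^2=nJ$ one gets $\exp(-itJ)=I+\tfrac{1}{n}(e^{-itn}-1)J$; at $t=t_k\in 2\pi\Zl$ with $n\in\Zl$ this reduces to $I$, and likewise $e^{it_k}=1$ and $H_{G(n,D)}(-t_k)=I$. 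Because $A_{C_n}$ is real symmetric, $H_{C_n}(-t_k)=\overline{H_{C_n}(t_k)}$, hence
\[
H_{\overline{C_n\cup G(n,D)}}(t_k)e_0=\overline{H_{C_n}(t_k)e_0}\longrightarrow \bar\gamma e_{n/2},
\]
establishing PGST for the complement along the same sequence $\{t_k\}$.

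The only subtle step is the handling of the all-ones matrix $J$ in the complement: in general it would obstruct such a transfer argument, and it is precisely the combination $t\in 2\pi\Zl$ with $n\in\Zl$ that makes $\exp(-itJ)$ and $e^{it}$ collapse to the identity simultaneously. Everything else is routine manipulation of simultaneously diagonalisable circulant matrices and of the fact that integral graphs are periodic at every element of $2\pi\Zl$.
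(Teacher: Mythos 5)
Your proposal is correct and follows essentially the same route as the paper: factor the transition matrix using commutativity of circulant adjacency matrices, kill the integral gcd-graph factor (and the $J-I$ contribution for the complement) at times in $2\pi\Zl$, and invoke Lemma \ref{l1}. The only cosmetic difference is that you compute $\exp(-itJ)$ explicitly via $J^2=nJ$ where the paper simply notes that the eigenvalues of $J-I$ are integers.
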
 
\begin{proof}
Notice that both graphs $C_n$ and $G(n,D)$ have the same vertex set $\Zl_n$ but the edge sets are disjoint as $1\not\in D$. Suppose $A$ and $B$ are the adjacency matrices of $C_n$ and $G(n,D)$, respectively. The adjacency matrix of $C_n\cup G(n,D)$ is therefore $A+B$. Since the matrices $A$ and $B$ commute, the transition matrix of $C_n\cup G(n,D)$ is
\[H(t)=H_A(t) H_B(t),\]  
where $H_A(t)$ and $H_B(t)$ are transition matrices of $C_n$ and $G(n,D)$, respectively. Suppose $B$ has the spectral decomposition $\sum\limits_{j=1}^r \theta_j E_j$. Since the graph $G(n,D)$ is integral, the values $\theta_j$'s are integers. This in turn implies that if $t\in 2\pi\Zl$ then 
\[H_B(t)=\exp{\left(-itB\right)}=\sum\limits_{j=1}^r \exp{(-it\theta_j)}E_j=I.\]
Hence $H(t)=H_A(t)$ whenever $t\in 2\pi\Zl$. Finally, by Lemma \ref{l1}, we observe that $C_n\cup G(n,D)$ exhibits PGST.\par
It remains to show that complement of $C_n\cup G(n,D)$ also admits PGST. The adjacency matrix of the complement is $J-I-(A+B).$ Since all circulant graphs are regular, the matrix $A+B$ commutes with $J-I$. If $H'(t)$ is the transition matrix of the complement then
\[H'(t)=\exp{\left(-it\left(J-I\right)\right)}H(-t).\]
Clearly the eigenvalues of $J-I$ are integers. Hence following the same argument as given in the previous part, we have the desired result.
\end{proof}
In Theorem \ref{t1}, the graph $C_n\cup G(n,D)$ can never be a gcd graph and so it cannot be integral. The reason is that if $C_n\cup G(n,D)=G(n,D')$ for some divisor set $D'$ then $1\in D'$. Therefore $\{1,n-1\}\cup S_n(D)$ contains all odd numbers in $\Zl_n$ as $\{1,n-1\}\cup S_n\left(D\right)=S_n\left(D'\right).$ As $n=2^k\;(k\geq 3)$, we see, for example, that $\{1,n-1\}\cup S_n(D)$ can never contain $3$.\par
Notice that if $D$ is empty then Theorem \ref{t1} implies that complement of a cycle also allows PGST. Another thing to observe in the proof of Theorem \ref{t1} is that for a fixed $n$, there exist a fixed sequence with respect to which all graphs of the form $C_n\cup G(n,D)$ as well as their complement exhibit PGST. \par
It turns out that there are some more graphs allowing PGST apart from the circulant graphs we already mentioned. Before finding those graphs we introduce the following notations. For $k\geq 3$, we denote
\[\mathcal{G}_k = \left\lbrace C_{2^k}\cup G\left(2^k,D\right)\;:\; D \text{ is a set of proper divisors of } 2^k \text{ and } 1\not\in D  \right\rbrace.\]
Also the set of the complements of graphs in $\mathcal{G}_k$ is denoted by $\bar{\mathcal{G}_k}$. Further we set
\[\mathcal{G}=\bigcup\limits_{k\geq 3}\left(\mathcal{G}_k\cup\bar{\mathcal{G}_k}\right).\]
Now we find the following corollaries regarding PGST in Cartesian products.
\begin{cor}\label{c0}
Let $G_1, G_2\in\mathcal{G}_k\cup\bar{\mathcal{G}_k}$. Then the Cartesian product $G_1\square G_2$ as well as its complement admit pretty good state transfer. 
\end{cor}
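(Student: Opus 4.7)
The plan is to combine the Cartesian product formula for transition matrices with the observation made immediately after Theorem \ref{t1}: there exists a single sequence $\{t_k\}\subset 2\pi\Zl$ with respect to which \emph{every} graph in $\mathcal{G}_k\cup\bar{\mathcal{G}_k}$ admits PGST between antipodal vertices. Consequently, for each $j\in\{1,2\}$ I may fix $u_j\in\Zl_{2^k}$, set $v_j=u_j+2^{k-1}$, and obtain unit-modulus complex numbers $\gamma_j$ with
\[H_{G_j}(t_k)e_{u_j}\longrightarrow \gamma_j e_{v_j}.\]

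For the Cartesian product itself, I would invoke the lemma $H_{G_1\square G_2}(t)=H_{G_1}(t)\otimes H_{G_2}(t)$ together with the fact that the Kronecker product of vectors is continuous in each factor. Applied to $e_{u_1}\otimes e_{u_2}$ this gives
\[H_{G_1\square G_2}(t_k)\bigl(e_{u_1}\otimes e_{u_2}\bigr)=\bigl(H_{G_1}(t_k)e_{u_1}\bigr)\otimes\bigl(H_{G_2}(t_k)e_{u_2}\bigr)\longrightarrow (\gamma_1\gamma_2)\bigl(e_{v_1}\otimes e_{v_2}\bigr),\]
which is exactly PGST in $G_1\square G_2$ between the vertices indexed by $(u_1,u_2)$ and $(v_1,v_2)$.

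For the complement I would mirror the final paragraph of the proof of Theorem \ref{t1}. The graphs $G_1,G_2$ are circulant, hence regular, so $G_1\square G_2$ is regular as well; its adjacency matrix therefore commutes with $J$, the $2^{2k}\times 2^{2k}$ all-ones matrix. Consequently
\[H_{(G_1\square G_2)^c}(t)=\exp\bigl(-it(J-I)\bigr)\,H_{G_1\square G_2}(-t),\]
and because $J-I$ has only the integer eigenvalues $2^{2k}-1$ and $-1$, the first factor equals $I$ for every $t\in 2\pi\Zl$. Evaluating at $t=t_k$ yields $H_{(G_1\square G_2)^c}(t_k)=H_{G_1\square G_2}(-t_k)=\overline{H_{G_1\square G_2}(t_k)}$, so the PGST established above descends (with conjugated phase $\overline{\gamma_1\gamma_2}$) to the complement. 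The only nontrivial ingredient is the existence of a single PGST sequence that works for both $G_1$ and $G_2$, but this is exactly the content of the remark after Theorem \ref{t1}, so no new simultaneous-approximation argument is required.
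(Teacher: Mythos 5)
Your proposal is correct and follows essentially the same route as the paper: the same single-sequence observation from Theorem \ref{t1}, the same Kronecker-product factorization of the transition matrix, and the same regularity argument for the complement (which you actually spell out a bit more explicitly than the paper does, via $H(-t_k)=\overline{H(t_k)}$). No gaps.
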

\begin{proof}
In the proof of Theorem \ref{t1} we see that if $G_1, G_2\in\mathcal{G}_k\cup\bar{\mathcal{G}_k}$ then both $G_1$ and $G_2$ have PGST with respect to the same sequence $\{t_m\}$ in $2\pi\Zl$. Suppose $G_1$ admits PGST between the vertices $u_1$ and $v_1$ and $G_2$ admits PGST between the vertices $u_2$ and $v_2$. Also assume that $H_{G_1}(t)$ and $H_{G_2}(t)$ are the transition matrices of $G_1$ and $G_2$, respectively. Therefore there exist $\gamma_1,\gamma_2\in\Cl$ with $|\gamma_1|=|\gamma_2|=1$ such that
\[\lim_{m\rightarrow\infty} e_{u_1}^T H_{G_1}(t_m) e_{v_1}=\gamma_1 \text{ and } \lim_{m\rightarrow\infty} e_{u_2}^T H_{G_2}(t_k) e_{v_2}=\gamma_2.\]
Using the property of transition matrix of a Cartesian product, we have
\begin{eqnarray*}
\left(e_{u_1}\otimes e_{u_2}\right)^T\left(H_{G_1\square G_2}\left(t_m\right)\right)\left(e_{v_1}\otimes e_{v_2}\right)
& = & \left(e_{u_1}\otimes e_{u_2}\right)^T\left(H_{G_1}\left(t_m\right)\otimes H_{G_2}\left(t_m\right)\right)\left(e_{v_1}\otimes e_{v_2}\right)\\
& = & \left(e_{u_1}^T H_{G_1}\left(t_m\right)e_{v_1}\right)\cdot\left(e_{u_2}^T H_{G_2}\left(t_m\right)e_{v_2}\right).
\end{eqnarray*}
Now taking limits on both sides we find that $G_1\square G_2$ admits PGST.\par
It remains to show that the complement of $G_1\square G_2$ exhibits PGST. Notice that both $G_1$ and $G_2$ are regular graphs and therefore $G_1\square G_2$ is also a regular graph. Now as in the proof of second part of Theorem \ref{t1}, we have the desired conclusion.
\end{proof}
More generally, following the proof of Corollary \ref{c0}, we can deduce that if two graphs have PGST with respect to the same sequence then their Cartesian product also admits PGST with respect to that sequence. In the next result, we find another class of graphs exhibiting PGST.
\begin{cor}
Let a graph $G_1$ be periodic at a vertex at time $2\pi$. If $G_2\in\mathcal{G}$ then the Cartesian product $G_1\square G_2$ admits pretty good state transfer. If $G_1$ is regular then the complement of $G_1\square G_2$ also exhibits pretty good state transfer. 
\end{cor}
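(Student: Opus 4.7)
The plan is to mimic the structure of Theorem~\ref{t1} but with periodicity of $G_1$ playing the role that integrality of $G(n,D)$ played there. Since $G_2\in\mathcal{G}$, the proof of Theorem~\ref{t1} supplies a sequence $\{t_k\}\subset 2\pi\Zl$, vertices $u_2,v_2$ of $G_2$, and a unit-modulus $\gamma_2\in\Cl$ with
\[\lim_{k\to\infty}H_{G_2}(t_k)\,e_{u_2}=\gamma_2 e_{v_2}.\]
Let $u_1$ be a vertex at which $G_1$ is periodic at time $2\pi$, so $H_{G_1}(2\pi)\,e_{u_1}=\gamma\, e_{u_1}$ for some $\gamma\in\Cl$ with $|\gamma|=1$. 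Writing $t_k=2\pi q_k$ with $q_k\in\Zl$ and using the semigroup identity $H_{G_1}(2\pi q_k)=H_{G_1}(2\pi)^{q_k}$, one gets $H_{G_1}(t_k)\,e_{u_1}=\gamma^{q_k}\,e_{u_1}$.

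Next I would use a compactness argument. The scalars $\gamma^{q_k}$ lie on the unit circle, so by Bolzano--Weierstrass there is a subsequence $\{q_{k_j}\}$ with $\gamma^{q_{k_j}}\to\gamma_1$ for some unit-modulus $\gamma_1$. Applying the Cartesian product formula $H_{G_1\square G_2}(t)=H_{G_1}(t)\otimes H_{G_2}(t)$ along this subsequence,
\[H_{G_1\square G_2}(t_{k_j})\bigl(e_{u_1}\otimes e_{u_2}\bigr)=\bigl(H_{G_1}(t_{k_j})e_{u_1}\bigr)\otimes\bigl(H_{G_2}(t_{k_j})e_{u_2}\bigr),\]
and passing to the limit yields
\[\lim_{j\to\infty}H_{G_1\square G_2}(t_{k_j})\bigl(e_{u_1}\otimes e_{u_2}\bigr)=\gamma_1\gamma_2\,\bigl(e_{u_1}\otimes e_{v_2}\bigr),\]
which is PGST in $G_1\square G_2$ between $(u_1,u_2)$ and $(u_1,v_2)$.

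For the complement, I would argue exactly as in the second half of Theorem~\ref{t1}. When $G_1$ is regular the Cartesian product $G_1\square G_2$ is regular as well (graphs in $\mathcal{G}$ are circulant or complements of circulants, hence regular), so its adjacency matrix commutes with $J-I$ and the transition matrix of the complement factors as $H'(t)=\exp(-it(J-I))\,H_{G_1\square G_2}(-t)$. For $t\in 2\pi\Zl$ the eigenvalues of $J-I$ are integers, so the first factor reduces to the identity, while taking complex conjugates (permitted because adjacency matrices and standard basis vectors are real) converts the PGST limit above into
\[\lim_{j\to\infty}H'(t_{k_j})\bigl(e_{u_1}\otimes e_{u_2}\bigr)=\overline{\gamma_1\gamma_2}\,\bigl(e_{u_1}\otimes e_{v_2}\bigr).\]

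The only delicate point is the compactness step: periodicity of $G_1$ at time $2\pi$ does not, on its own, force the scalar $\gamma^{q_k}$ to converge, so extracting a convergent subsequence is essential. Everything else is a direct combination of the tensor factorization for transition matrices and the complement trick already used in Theorem~\ref{t1}.
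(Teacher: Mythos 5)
Your proposal is correct and follows essentially the same route as the paper: use periodicity of $G_1$ at time $2\pi$ to get $H_{G_1}(t_k)e_{u_1}=\gamma^{q_k}e_{u_1}$ along the sequence $\{t_k\}\subset 2\pi\Zl$ coming from $G_2\in\mathcal{G}$, extract a convergent subsequence of the unit-modulus scalars by compactness, and combine via the tensor factorization $H_{G_1\square G_2}=H_{G_1}\otimes H_{G_2}$, with the complement handled by the regularity/$J-I$ trick of Theorem~\ref{t1}. Your version is in fact slightly more careful than the paper's (you work with the full vector equation $H_{G_1}(2\pi)e_{u_1}=\gamma e_{u_1}$ rather than just the diagonal entry, and you spell out the conjugation step for the complement), but the ideas are identical.
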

\begin{proof}
Suppose $G_1$ is periodic at a vertex $u$ at time $2\pi$. If $H_{G_1}(t)$ is the transition matrix of $G_1$ then there exists $\gamma_1\in\Cl$ with $|\gamma|=1$ such that $e_u^T H_{G_1}(2\pi)e_u=\gamma_1$. Hence for $q\in\Zl$, we have $e_u^T H_{G_1}(2q\pi)e_u=\gamma_1^q$. Since $G_2\in\mathcal{G}$ there is a sequence $\{t_m\}$ in $2\pi\Zl$ with respect to which $G_2$ exhibits PGST between two vertices $v$ and $w$, say. Since the unit circle is compact there is a subsequence $\{t'_m\}$ of $\{t_m\}$ such that $\left\lbrace e_u^T H_{G_1}(t'_m)e_u\right\rbrace$ is convergent. If $H_{G_2}(t)$ is the transition matrix of $G_2$ then
\begin{eqnarray*}
\left(e_{u}\otimes e_{v}\right)^T\left(H_{G_1\square G_2}\left(t'_m\right)\right)\left(e_{u}\otimes e_{w}\right)
= \left(e_{u}^T H_{G_1}\left(t'_m\right)e_{u}\right)\cdot\left(e_{v}^T H_{G_2}\left(t'_m\right)e_{w}\right).
\end{eqnarray*}
Now taking limits on both sides, we find that $G_1\square G_2$ admits PGST.\par
In case $G_1$ is regular then $G_1\square G_2$ is also regular. Therefore the complement of $G_1\square G_2$ also exhibits PGST.
\end{proof}
\textbf{Remark:} If a graph is integral then it is periodic at $2\pi$. So Cartesian product of an integral graph and a graph in $\mathcal{G}$ allows PGST. This gives a large number of graphs having PGST.\par
In Lemma \ref{l1}, we found a class of cycles with PGST. Next we investigate PGST in the remaining class of cycles. The only possibility we need to consider is the case when $n$ has an odd prime factor. We have used some of the techniques from \cite{god4} to prove the following result. 
\begin{lem}\label{l2}
Let $m\in\Nl$ and $p$ be an odd prime such that $n=mp$. Then the cycle $C_{n}$ does not exhibit pretty good state transfer.
\end{lem}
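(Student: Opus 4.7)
The plan is to first dispose of the odd $n$ case using Lemma~\ref{l0}, which forces PGST in a cycle to occur only between antipodal vertices, and then to work with the explicit formula~(\ref{e0}) for the transfer amplitude when $n$ is even and divisible by an odd prime $p$. Since $H(t)_{0,n/2}$ is a normalized sum of $n$ unit-modulus complex numbers, $|H(t_k)_{0,n/2}|\to 1$ forces each summand $\exp[-i(\lambda_l t_k + l\pi)]$ to converge to a common unit-modulus limit $e^{-i\mu}$. I would recast this as a simultaneous Diophantine condition: for every $\epsilon>0$ there must exist $t,\mu\in\Rl$ and integers $k_l$ with
\[|\lambda_l t + l\pi - \mu - 2\pi k_l|<\epsilon \quad \text{for every } l\in\{0,1,\ldots,n-1\}.\]

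The crucial step, and the one I expect will be the main obstacle, is to extract a rational relation among the eigenvalues that is sensitive to the parity of $p$. Because the $p$-th roots of unity sum to zero,
\[\sum_{j=0}^{p-1}\lambda_{a+jn/p}\;=\;2\,\Re\!\left(\omega_n^{a}\sum_{j=0}^{p-1}\omega_p^{j}\right)\;=\;0\]
for every integer $a$. Summing the $p$ approximation inequalities indexed by $l_j = a + jn/p$ (which are $p$ distinct elements of $\{0,\ldots,n-1\}$ when $a\in\{0,1\}$, since $n\geq 2p$) kills the coefficient of $t$ and leaves an estimate on $\sum_j l_j\pi - 2\pi K_a - p\mu$ with $K_a\in\Zl$.

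Finally I would compare the estimates for $a=0$ and $a=1$. A short computation gives $\sum_j l_j = n(p-1)/2$ and $p+n(p-1)/2$ respectively, so subtracting the two estimates cancels both the $p\mu$ term and the common piece $n(p-1)\pi/2$, leaving
\[|\,p\pi - 2\pi(K_1-K_0)\,|<2p\epsilon.\]
Since $p$ is odd, $p\pi$ is an odd multiple of $\pi$ while $2\pi(K_1-K_0)$ is always an even multiple of $\pi$, so the left-hand side is bounded below by $\pi$. Choosing $\epsilon < \pi/(2p)$ at the outset yields the contradiction. The remaining bookkeeping is routine: one only needs to verify that $m\geq 2$ in the relevant case (automatic once $n$ is even with odd prime factor $p$), so that all the indices $l_j$ really do lie in $\{0,\ldots,n-1\}$ without any wraparound correction affecting the parity computation.
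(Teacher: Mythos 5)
Your proof is correct and follows essentially the same route as the paper: both arguments extract the vanishing $\Ql$-linear relation $\sum_{j=0}^{p-1}\lambda_{a+jn/p}=0$ from $\sum_j\omega_p^j=0$ (the paper writes it for $a=1,2$ as equations (\ref{e3})--(\ref{e4}), you for $a=0,1$), and both obtain the contradiction from the fact that shifting $a$ by $1$ changes the accumulated phase $\sum_j l_j\pi$ by $p\pi$ with $p$ odd. The only difference is cosmetic bookkeeping --- you track additive Diophantine inequalities with explicit integers $k_l$, while the paper multiplies the $p$ limits $\exp[-i(\lambda_{l+1}-\lambda_l)t'_k]\to-1$ to get $\exp(-iLt'_k)\to(-1)^p=-1$ against $L=0$.
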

\begin{proof}
Notice that if $m$ is an odd number then, by Lemma \ref{l0}, we have the desired result. Hereafter we assume that $m$ is even. For an odd prime $p$ we have the following identity involving the primitive $p$-th root $\omega_p$ of unity:
\[1+\omega_p+\omega_p^2+\ldots+\omega_p^{p-1}=0.\]
This further yields
\begin{eqnarray}\label{e2}
1+2\sum\limits_{r=1}^{\frac{p-1}{2}}\cos{\left(\frac{2r\pi}{p}\right)}=0.
\end{eqnarray}
Multiplying both sides of $\left(\ref{e2}\right)$ by $2\cos{\left(\frac{2\pi}{n}\right)}$ we obtain the following relation of eigenvalues of $C_n$ (as given in $\left(\ref{e}\right)$).
\begin{eqnarray}\label{e3}
\lambda_1 + \sum\limits_{r=1}^{\frac{p-1}{2}}\lambda_{mr+1} + \sum\limits_{r=1}^{\frac{p-1}{2}} \lambda_{mr-1}=0.
\end{eqnarray}
Similarly multiplying $\left(\ref{e2}\right)$ by $2\cos{\left(\frac{4\pi}{n}\right)}$ gives
\begin{eqnarray}\label{e4}
\lambda_2 + \sum\limits_{r=1}^{\frac{p-1}{2}}\lambda_{mr+2} + \sum\limits_{r=1}^{\frac{p-1}{2}} \lambda_{mr-2}=0.
\end{eqnarray}
Now from equation $\left(\ref{e3}\right)$ and $\left(\ref{e4}\right)$ we get
\begin{eqnarray}\label{e5}
\left(\lambda_2 - \lambda_1\right) + \sum\limits_{r=1}^{\frac{p-1}{2}}\left(\lambda_{mr+2} - \lambda_{mr+1}\right) + \sum\limits_{r=1}^{\frac{p-1}{2}}\left(\lambda_{mr-2}-\lambda_{mr-1}\right)=0.
\end{eqnarray}
If $C_n$ admits PGST then, by equation $\left(\ref{e0}\right)$, we have a sequence of real numbers $\{t_k\}$ and a complex number $\gamma$ with $|\gamma|=1$ such that 
\[\lim_{k\rightarrow\infty}\sum\limits_{l=0}^{n-1}\exp{\left[-i\left(\lambda_l t_k+l\pi\right)\right]}=n\gamma.\]
Since the unit circle is compact, we have a subsequence $\{t'_k\}$ of $\{t_k\}$ such that
\[\lim_{k\rightarrow\infty}\exp{[-i\left(\lambda_l t'_k+l\pi\right)}=\gamma,\text{ for }0\leq l\leq n-1.\] 
This further implies that
\[\lim_{k\rightarrow\infty}\exp{[-i\left(\lambda_{l+1}-\lambda_l\right)t'_k]}=-1.\]
Denoting the term in left hand side of equation $\left(\ref{e5}\right)$ as $L$, it is evident that
\[\lim_{k\rightarrow\infty}\exp{\left(-iLt'_k\right)}=-1.\]
But this is not possible as $L=0$. Hence there is no pretty good state transfer in $C_n$ whenever $n$ has an odd prime factor.
\end{proof}
So far we have developed a complete characterization for PGST on cycles. We state the result as a theorem.
\begin{theorem}
A cycle $C_n$ admits pretty good state transfer if and only if $n=2^k$ for $k\geq 2.$
\end{theorem}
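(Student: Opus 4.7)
The theorem is already a synthesis of the three lemmas proved immediately before it, so my plan is to assemble them cleanly rather than introduce any new machinery. The statement splits into two directions, and I would handle them in opposite orders from the flow of the paper.

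For the ``if'' direction, suppose $n = 2^k$ with $k \geq 2$. This is exactly the content of Lemma \ref{l1}, which produces an explicit sequence in $2\pi\Zl$ realizing pretty good state transfer between $0$ and $n/2$. So nothing further needs to be said here beyond citing that lemma.

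For the ``only if'' direction, assume $C_n$ admits pretty good state transfer, and I want to conclude that $n$ is a power of $2$. First I would argue that $n$ must be even: by Lemma \ref{l0}, pretty good state transfer in $C_n$ is only possible between antipodal pairs $u$ and $u + n/2$, which already forces $n$ to be even (otherwise $u + n/2$ is not a vertex). Second, once $n$ is even, I would suppose for contradiction that $n$ is not a power of $2$. Then $n$ has some odd prime factor $p$, so we can write $n = mp$ for some $m \in \Nl$, and Lemma \ref{l2} immediately rules out pretty good state transfer in $C_n$. This contradiction forces $n = 2^k$, and since $C_2$ is just an edge with a trivial spectrum (and the case $k = 1$ is excluded because $C_2$ is typically not considered a cycle, or alternatively by noting that $k \geq 2$ was established in Lemma \ref{l1}'s statement as the range where the construction works, starting from $C_4$ which has PST), we get $k \geq 2$.

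There is no real obstacle here; the three lemmas cover all three regimes (odd $n$, even $n$ with an odd prime factor, and $n = 2^k$) exhaustively, so the proof is essentially one or two sentences of bookkeeping. The only subtlety worth flagging is making sure the ``$n$ even but not a power of $2$'' case is handled by Lemma \ref{l2} in the form $n = mp$: here $m = n/p$ and $p$ is any odd prime dividing $n$, so the hypothesis of Lemma \ref{l2} is satisfied without any restriction on the parity of $m$.
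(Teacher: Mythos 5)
Your proposal is correct and is essentially identical to what the paper intends: the paper states this theorem without a separate proof, treating it as the immediate synthesis of Lemma \ref{l0} (evenness and antipodality), Lemma \ref{l1} (the case $n=2^k$), and Lemma \ref{l2} (exclusion of any $n$ with an odd prime factor), which is exactly the bookkeeping you carry out. The only cosmetic remark is that the exclusion of $k=1$ rests on the convention that $C_2$ is not a cycle, not on the range of validity stated in Lemma \ref{l1}; your first justification is the right one.
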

In the next result we see that complement of all cycles do not posses PGST. This gives an another class of circulant graphs not allowing PGST. We provide this result as a corollary.   
\begin{cor}\label{c2}
Let $m\in\Nl$ with $m\neq 2$ such that $n=mp$ for some odd prime $p$. Then the complement of the cycle $C_{n}$ does not exhibit pretty good state transfer.
\end{cor}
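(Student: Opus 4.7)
The plan is to adapt the argument of Lemma \ref{l2} to the complement of the cycle, reading off its spectrum from the circulant structure and verifying that the eigenvalue identity (\ref{e5}) survives the substitution. First I would handle the odd case: if $m$ is odd then $n = mp$ is odd, and Lemma \ref{l0} already rules out PGST in the complement of $C_n$. So I may assume henceforth that $m$ is even; combined with $m \neq 2$ this forces $m \geq 4$.

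Next, since $C_n$ and its complement share the Cayley-graph eigenvectors $\vl_l$ of (\ref{e}) and only $\vl_0 = [1, 1, \ldots, 1]^T$ sees the all-ones matrix $J$, the complement of $C_n$ has spectrum $\mu_0 = n - 3$ together with $\mu_l = -1 - \lambda_l$ for $1 \leq l \leq n - 1$. Exactly as in the derivation of (\ref{e0}), the $(0, n/2)$-th entry of its transition matrix reads
$$H'(t)_{0, \frac{n}{2}} = \frac{1}{n} \sum_{l=0}^{n-1} \exp\bigl[-i(\mu_l t + l\pi)\bigr].$$
By Lemma \ref{l0} the only candidate pair is $\{0, n/2\}$; assuming PGST between them, the compactness argument from the proof of Lemma \ref{l2} extracts a subsequence $\{t'_k\}$ and a unimodular $\gamma$ with $\exp[-i(\mu_l t'_k + l\pi)] \to \gamma$ for every $l$, and hence $\exp[-it'_k(\mu_{l+1} - \mu_l)] \to -1$ for $0 \leq l \leq n-2$.

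Finally I would recycle identity (\ref{e5}) of Lemma \ref{l2}. The indices entering it are $1$, $2$, and $mr \pm 1$, $mr \pm 2$ for $1 \leq r \leq (p-1)/2$; since $m \geq 4$ each of these lies in $\{1, \ldots, n-1\}$, so $\mu_l = -1 - \lambda_l$ holds for all of them and (\ref{e5}) transfers (after multiplication by $-1$) to
$$(\mu_2 - \mu_1) + \sum_{r=1}^{(p-1)/2}(\mu_{mr+2} - \mu_{mr+1}) + \sum_{r=1}^{(p-1)/2}(\mu_{mr-2} - \mu_{mr-1}) = 0.$$
Applying $\exp(-it'_k \cdot)$ to the left side expresses it as a product of $p$ signed consecutive-index differences each tending to $-1$, giving the limit $(-1)^p = -1$; applying it to the right side gives $\exp 0 = 1$. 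This contradiction completes the proof.

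The single delicate point, and the reason the hypothesis $m \neq 2$ is needed, is the verification that no index appearing in (\ref{e5}) collapses to $0$ modulo $n$: had $m = 2$ and $r = 1$, the index $mr - 2$ would equal $0$, and because $\mu_0 - (-1 - \lambda_0) = n \neq 0$ the transferred identity would fail, dissolving the contradiction.
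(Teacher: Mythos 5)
Your proof is correct and follows essentially the same route as the paper: reduce to even $m$ via Lemma \ref{l0}, substitute $\lambda'_l=-1-\lambda_l$ into the two $p$-term identities (each now summing to $-p$), subtract to recover the analogue of (\ref{e5}), and derive the contradiction $(-1)^p=1$ as in Lemma \ref{l2}. Your explicit check that no index collapses to $0$ modulo $n$ (which is exactly where $m\neq 2$ is used) is the same point the paper makes implicitly with its hypothesis $m\geq 3$.
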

\begin{proof}
For $m=1$, by Lemma \ref{l0}, we conclude that complement of $C_{n}$ does not admit PGST. 
So we only consider the case $m\geq 3$.\par
The cycles are regular graphs. Therefore the eigenvalues of the complement of $C_{n}$ are $\lambda'_0=n-\lambda_0-1$ and $\lambda'_l=-\lambda_l-1,$ whenever $1\leq l\leq n-1,$ corresponding to the same set of eigenvectors as that of $C_{n}$. This means that $\left(0,\frac{n}{2}\right)$-th entry of the transition matrix of the complement graph can be obtained from equation $\left(\ref{e0}\right)$ by replacing the eigenvalues $\lambda_l$ with $\lambda'_l$. Now along the line of proof of Lemma \ref{l2}, we find that
\[\lambda'_1 + \sum\limits_{r=1}^{\frac{p-1}{2}}\lambda'_{mr+1} + \sum\limits_{r=1}^{\frac{p-1}{2}} \lambda'_{mr-1}=-p.\]
Since $m\geq 3$, we also have
\[\lambda'_2 + \sum\limits_{r=1}^{\frac{p-1}{2}}\lambda'_{mr+2} + \sum\limits_{r=1}^{\frac{p-1}{2}} \lambda'_{mr-2}=-p.
\]
The above two identities gives
\begin{eqnarray*}
\left(\lambda'_2 - \lambda'_1\right) + \sum\limits_{r=1}^{\frac{p-1}{2}}\left(\lambda'_{mr+2} - \lambda'_{mr+1}\right) + \sum\limits_{r=1}^{\frac{p-1}{2}}\left(\lambda'_{mr-2}-\lambda'_{mr-1}\right)=0,
\end{eqnarray*}
which is similar to equation $\left(\ref{e5}\right)$. Hence, following the same argument as given in Lemma \ref{l2}, we conclude that the complement of $C_n$ does not exhibit PGST. 
\end{proof}

\section{Conclusions}
In the past decade the study of PST in graphs has received considerable attention. Now we know a handful of graphs exhibiting PST. It is always preferable to find graphs having PST between vertices at a long distance. So far the best known graphs in this regard are the hypercubes. In a hypercube with $n$ vertices, we have PST between vertices at a distance $\log_2(n)$. It is thus desirable to have graphs allowing PST between vertices at a distance of $O(n).$ Most lucrative classes of graphs in this regard are the paths $P_n$ and the cycles $C_n$ as both of them have large diameter. But it is well known that $P_n$ does not exhibit PST whenever $n\geq 4$ and $C_n$ admits PST only when $n=4$.\par
Meanwhile the study of PGST got some interest. In \cite{god4}, the authors presented a remarkable result which classifies the paths $P_n$ admitting PGST between the end vertices. This serves as an example where PGST takes place between vertices at a distance $n$. In this article, we found that $C_{n}$ exhibits PGST if and only if $n$ is a power of two and PGST occurs between any pair of antipodal vertices. This gives an another class of graphs having PGST between vertices at a distance of $O(n).$ We also found a good number of circulant graphs allowing or not allowing PGST. Apart from the circulant graphs, we found some other graphs allowing PGST.\par
There are a few scopes for further research in this direction. We found that the circulant graph $C_{2^k}\cup G\left(2^k,D\right)$ for $k\geq3,1\not\in D,$ admits PGST. Thus one may try to find if there are any other circulant graphs allowing PGST. If we go through the proof of Lemma \ref{l2} and Corollary \ref{c2}, we see that the complement of $C_{2p}$, where $p$ is a prime, does not have PGST with respect to any sequence in $\pi\Zl.$ In that case, it would be interesting to find if the complement of those cycles have PGST at all. Moreover, it is desirable to classify which circulant graphs exhibit PGST. More generally, it is preferable to have a characterization of PGST in Cayley graphs.


\begin{thebibliography}{99}

\bibitem{ack} E. Ackelsberg, Z. Brehm, A. Chan, J. Mundinger and C. Tamon, \emph{Laplacian State Transfer in Coronas}, Linear Algebra and its Applications, \textbf{506}:154-167 (2016).

\bibitem{ack1} E. Ackelsberg, Z. Brehm, A. Chan, J. Mundinger and C. Tamon, \emph{Quantum State Transfer in Coronas}, arXiv preprint arXiv:1605.05260 (2016).

\bibitem{apo} T. M. Apostol, \emph{Modular Functions and Dirichlet Series in Number Theory}, 2nd ed. New York: Springer-Verlag (1997).

\bibitem{mil4} M. Ba\v si\'c, \emph{Characterization of circulant networks having perfect state transfer}, Quantum inf. process, \textbf{12}:345-364 (2011).

\bibitem{ben} C. H. Bennett and G. Brassard, \emph{Quantum Cryptography: Public Key Distribution and Coin Tossing}, Proc. IEEE Int. Conf. Computers Systems and Signal Processing, Bangalore, India. 175-179 (1984).

\bibitem{ber} A. Bernasconi, C. Godsil and S. Severini, \emph{Quantum networks on cubelike graphs}, Physical Review A, \textbf{78}:052320 (2008).

\bibitem{bose} S. Bose, \emph{Quantum communication through an unmodulated spin chain}, Physical Review Letters, \textbf{91}(20):207901 (2003).

\bibitem{cha} R. J. Chapman, M. Santandrea, Z. Huang, G. Corrielli, A. Crespi, M. H. Yung, R. Osellame and A. Peruzzo, \emph{Experimental perfect state transfer of an entangled photonic qubit}, Nature communications, 7 (2016).

\bibitem{chr1} M. Christandl, N. Datta, A. Ekert and A. J. Landahl, \emph{Perfect state transfer in quantum spin networks}, Physical Review Letters, \textbf{92}:187902 (2004).

\bibitem{chr2} M. Christandl, N. Datta, T. Dorlas, A Ekert, A. Kay and A. J. Landahl, \emph{Perfect transfer of arbitrary states in quantum spin networks}, Physical Review A, \textbf{71}:032312 (2005).

\bibitem{chi} W. Cheung and C. Godsil, \emph{Perfect state transfer in cubelike graphs}, Linear Algebra and Its Applications, \textbf{435}(10):2468-2474 (2011).

\bibitem{cou} G. Coutinho and C. Godsil, \emph{Perfect state transfer in products and covers of graphs}, Linear and Multilinear Algebra 64.2:235-246 (2015).

\bibitem{cou1} G. Coutinho and C. Godsil, \emph{Perfect state transfer is poly-time}, arXiv preprint arXiv:1606.02264 (2016).

\bibitem{cou2} G. Coutinho, C. Godsil, K. Guo and F. Vanhove, \emph{Perfect state transfer on distance-regular graphs and association schemes}, Linear Algebra and its Applications, \textbf{478}:108-130 (2015).

\bibitem{ek} A. K. Ekert, \emph{Quantum cryptography based on Bell's theorem}, Physical Review Letters, \textbf{67}(6):661 (1991).

\bibitem{esc} J. P. Escofier, \emph{Galois Theory}, 1nd ed. New York: Springer-Verlag (2001).

\bibitem{fan} X. Fan and C. Godsil, \emph{Pretty good state transfer on double stars}, Linear Algebra and Its Applications, \textbf{438}(5):2346-2358 (2013).

\bibitem{god1} C. Godsil, \emph{State transfer on graphs}, Discrete Mathematics (2011).

\bibitem{god2} C. Godsil, \emph{When can perfect state transfer occur?} Electronic Journal of Linear Algebra, 23:877-890 (2012).

\bibitem{god3} C. Godsil, \emph{Periodic graphs}, Electron. J. Combin., 18(1): Paper 23, 15 (2011).

\bibitem{god4} C. Godsil, S. Kirkland, S. Severini and J. Smith, \emph{Number-theoretic nature of communication in quantum spin systems}, Physical review letters 109, no. 5: 050502 (2012).

\bibitem{kirk} S. Kirkland, \emph{Sensitivity analysis of perfect state transfer in quantum spin networks}, Linear Algebra and its Applications 472: 1-30 (2015).

\bibitem{mala} G. Malajovich, {An Effective Version of Kronecker's Theorem on Simultaneous Diophantine Approximation.} Instituto de Matem\'atica da Universidade Federal do Rio de Janeiro, Brasil (2001).

\bibitem{pal} H. Pal and B. Bhattacharjya, \emph{Perfect state transfer on NEPS of the path on three vertices}, Discrete Mathematics \textbf{339}(2): 831-838, (2016).

\bibitem{pal1} H. Pal and B. Bhattacharjya, \emph{Perfect State Transfer on gcd-graphs}, accepted in Linear and Multilinear Algebra (2016).

\bibitem{pal2} H. Pal and B. Bhattacharjya, \emph{A class of gcd-graphs having Perfect State Transfer}, arXiv preprint arXiv:1601.07398 (2016).
  	
\bibitem{pal3} H. Pal and B. Bhattacharjya, \emph{Pretty Good State Transfer on Some NEPS}, arXiv preprint arXiv:1604.08858 (2016).

\bibitem{so} W. So, \emph{Integral circulant graphs}, Discrete Mathematics, \textbf{306}(1): 153-158 (2006).

\bibitem{wal1} W. Klotz and T. Sander, {Integral Cayley graphs over abelian groups.} Electron. J. Combin 17.1: R81 (2010).
\end{thebibliography}
\end{document}